\documentclass[11pt]{article}
\usepackage{amsfonts,amssymb,amstext}
\usepackage{amsmath}
\usepackage{amstext}
\usepackage{amsthm}

\makeatletter

\oddsidemargin5mm
\evensidemargin5mm

\topmargin -1.7cm
\textwidth16cm
\textheight22.8cm

\newcommand{\beq}{\begin{equation}}
\newcommand{\eeq}{\end{equation}}
\newcommand{\bea}{\begin{eqnarray}}
\newcommand{\eea}{\end{eqnarray}}
\newcommand{\beaa}{\begin{eqnarray*}}
\newcommand{\eeaa}{\end{eqnarray*}}

\newcommand{\n}{\noindent}
\newcommand{\q}{\quad}

\newtheorem{thm}{Theorem}[section]

\newtheorem{rem}{Remark}[section]

\newcommand{\g}{\gamma}

\newcommand{\G}{\Gamma}

\newcommand{\al}{\alpha}
\newcommand{\la}{\lambda}
\newcommand{\f}{\infty}

\newcommand{\cd}{\cdot}
\newcommand{\si}{\sigma}

\newcommand{\be}{\beta}
\newcommand{\Th}{\Theta}

\newcommand{\itf}{\int_{-\infty}^{+\infty}}

\newcommand{\bsl}{\baselineskip}

\newcommand{\ConvD}{\overset{d}{\longrightarrow}}
\newcommand{\ConvP}{\overset{P}{\longrightarrow}}

\numberwithin{equation}{section}

\begin{document}

\title{On the robustness to small trends of parameter estimation for
continuous-time stationary models with memory}

\author{M. S. Ginovyan\footnote{The research of M. S. Ginovyan was partially supported by National Science Foundation Grant \#DMS-1309009 at Boston University.} \, and A. A. Sahakyan}
\date{}
\maketitle

\begin{abstract}
The paper deals with a question of robustness of inferences,
carried out on a continuous-time stationary process contaminated
by a small trend, to this departure from stationarity.
We show that a smoothed periodogram approach to parameter
estimation is highly robust to the presence of a small trend in the model.
The obtained result is a continuous version of that of  Hede and  Dai
(Journal of Time Series Analysis, 17, 141-150, 1996) for discrete
time processes.
\vskip2mm
\noindent
{\bf Keywords.} Trend; robust inference; short, intermediate
and long memory; smoothed periodogram; parameter estimation.
\end{abstract}

%\newpage

\section{Introduction}
\label{INT}
Much of statistical inferences about unknown spectral parameters is
concerned with the discrete-time stationary models, in which case it
is assumed that the model is centered, or has a constant mean
(see, e.g., Beran et al. \cite{Ber2},  Dzhaparidze \cite{Dz},
Giraitis et al. \cite{GKS}, Taniguchi and Kakizawa \cite{TK},
and references therein).
In this paper we are concerned with the robustness of inferences,
carried out on a continuous-time stationary process contaminated
by a small trend, to this departure from stationarity.

Specifically, let $\{Y(t),\, t\in \mathbb{R}\}$ be a a zero mean stationary
process with spectral density $f(\la,\theta)$, where
$\theta:=(\theta_1, \ldots, \theta_p)\in\Th\subset\mathbb{R}^p$
is an {\em unknown} vector parameter.
We want to make inferences about $\theta$ in the case where the actual
observed data are in the contaminated form:% ${\bf X}_T:=\{X(t)$, $0\le t\le T\}$ with
\beq
\label{i1}
X(t)=Y(t) + M(t), \quad 0\le t\le T,
\eeq
where $M(t)$ is a deterministic trend.

We assume that the trend $M(t)$ is small, that is, we consider the situation
in which the major trend is removed from the model and a certain component
that remains in the model has only minor effect.
In these cases standard inferences can be carried on the basis
of the stationary model $Y(t)$, and we are interested in question whether
the conclusions are robust against this kind of departure from the stationary model.

A sufficiently developed inferential theory is now available for a
continuous-time stationary model $Y(t)$.
For instance, in Anh et al. \cite{ALS,ALS2},
Avram et al. \cite{AvLS}, Casas and Gao \cite{CG}, Gao \cite{Go},
Gao et al. \cite{GAHT,GAH}, Leonenko and Sakhno \cite{LS}
were obtained sufficient conditions ensuring consistency and asymptotic
normality of various statistical estimators of $\theta$, including quasi
maximum likelihood (Whittle) and minimum contrast estimators of $\theta$,
constructed on the basis of a finite realization
${\bf Y}_T:=\{Y(t)$, $0\le t\le T\}$ of the process $Y(t)$.

In this paper we show that under some conditions on the process
$Y(t)$ and the deterministic trend $M(t)$ the above asymptotic
properties of Whittle and minimum contrast estimators remains valid
for the model $X(t)$, that is, the estimating procedure is
relatively robust against replacing the stationary model $Y(t)$
by the non-stationary model $X(t)$ of the form (\ref{i1}).

We will be concerned with this question for models which may exhibit
%both short-range dependence or long-range dependence.
long memory, short memory or intermediate memory.

Throughout the paper the letters $C$ and $c$ are used to denote positive
constants, the values of which can vary from line to line.

The paper is structured as follows.
In Section \ref{Model} we describe the statistical model. Section \ref{AR}
contains the approach and the main result of the paper - Theorem \ref{th1}.
Section \ref{Proof} is devoted to the proof of Theorem \ref{th1}.

\section{The model: long memory, short memory and intermediate memory processes}
\label{Model}
Let $\{Y(t), \, t\in \mathbb{R}\}$ be a centered, real-valued, continuous-time
second-order stationary process with covariance function $r(t)$ $(t\in \mathbb{R})$,
possessing a spectral density $f(\la)$ ($\la\in \mathbb{R}$), that is, ${E}[|Y(t)|^2]<\f$,
${E}[Y(t)]=0$, $r(t)={E}[Y(t+u)Y(u)]$ ($u, t\in \mathbb{R}$), and $r(t)$ and
$f(\la)$ are connected by the Fourier integral:
\beq
\label{ad1}
r(t)=\int_{\mathbb{R}}e^{i\la t}f(\la)d\la, \q t\in \mathbb{R}.
\eeq
There are several possible definitions of the notion of "memory"
of a stationary process, and they are not necessarily identical
(see, e.g., Beran et al. \cite{Ber2}, Gao \cite{Go}, Giraitis et al.
\cite{GKS}, Heyde and Dai \cite{HD}, Taniguchi and Kakizawa \cite{TK}).
In this paper, we define the memory concept basing on the integrability
property of covariance function $r(t)$, and depending on the memory
structure we will distinguish the following types of stationary models:
(a) short memory or short-range dependent,
(b) long memory or long-range dependent,
(c) intermediate memory or anti-persistent.

We will say that the process $Y(t)$ displays {\sl short memory (SM)}
or {\sl short-range dependence (SRD)} if the covariance function $r(t)$
is integrable: $r\in L^1(\mathbb{R})$ and $\itf r(t)dt\neq 0$.
In this case the spectral density $f(\lambda)$ is bounded away from
zero and infinity at frequency $\lambda=0$, that is, $0< f(0) <\f.$

A typical continuous-time short memory model example is the
stationary continuous-time autoregressive moving average (CARMA)
process whose spectral density is a rational function
(see, e.g., Brockwell \cite{Br1}).

Much of statistical inference is concerned with the short memory
stationary models.
However, data in many fields of science (economics, finance, hydrology,
telecommunications, etc.) is well modeled by a stationary process with
{\sl unbounded} or {\sl vanishing} at the origin spectral density
(see, e.g., Beran et al. \cite{Ber2}, Casas and Gao \cite{CG}, Gao \cite{Go},
Tsai and Chan \cite{TC}, and references therein).

The process $Y(t)$ is said to be {\sl anti-persistent} or exhibits
{\sl intermediate memory (IM)} if the covariance function $r(t)$
is integrable: $r\in L^1(\mathbb{R})$ and $\itf r(t)dt = 0$.
In this case the spectral density $f(\lambda)$ vanishes at frequency zero:
$f(0) =0.$

We will say that the process $Y(t)$ displays {\sl long memory (LM)} or
{\sl long-range dependence (LRD)} if the covariance function $r(t)$
is not integrable: $r\notin L^1(\mathbb{R})$.
In this case the spectral density $f(\lambda)$ has a pole at frequency zero,
that is, it is unbounded at the origin.

The memory property of a stationary process can also be characterized
by the behavior of spectral density $f(\la)$ in the neighborhood of zero,
or by the behavior of covariance function $r(t)$ at infinity (see, e.g.,
Beran et al. \cite{Ber2}, Section 1.3.4).

An example of a continuous-time model that displays the above defined
memory structures
%long memory and intermediate memory
is the continuous-time autoregressive fractionally integrated moving-average
(CARFIMA) process (see, e.g., Chambers \cite{Ch}, Tsai and Chan \cite{TC}).

%We will say that the process $Y(t)$ displays {\sl long-memory (LM)} or {\sl long-range dependence (LRD)} if the spectral density $f(\la)$
%is unbounded at the origin, more precisely, if there is a number $d\in(0,1/2)$ such that
%\begin{equation}
%\label{lm1}
%f(\la)\sim c\,|\la|^{-2d}, \q c>0, \, \la\in\mathbb{R},
%\eeq
%as $|\la|\to 0$, or
%\begin{equation}
%\label{lm2}
%r(t)\sim C\,t^{2d-1}, \q  t\in\mathbb{R},
%\end{equation}
%as $t\to \f$,
%where $L_f(\la)$ and $L_r(t)$ are slowly varying functions at zero and infinity, respectively.
%where the symbol $"\sim"$ indicates that the ratio of left- and right-hand sides tends to 1.
%Similarly, we say that the process $Y(t)$  displays {\sl intermediate--memory (IM)} or is {\sl anti-persistent} if the formulas (\ref{lm1}) and (\ref{lm2}) hold
%for $d<0$, %$d\in(-1/2,0)$, and in case of the formulation via $r(t)$,
%the additional condition $\itf r(t)dt=0$ holds.
%Note that, as in discrete-time case, the definition of long-range dependence
%given here implies $\itf r(t)dt =\f$ (see, e.g., \cite{Ber2}, Section 1.3.4).
%Notice also that for $d=0$ the process has SM, and for $d\geq 1/2$ the function
%$f(\la)$ in (\ref{lm1}) is not integrable, and thus it cannot represent a spectral
%density of a stationary process.
%The exponent $d$ will be refered as {\sl memory index} of the process $Y(t)$.

In the continuous context, a basic process which has commonly been
used to model LRD is fractional Brownian motion (fBm) $B_H(t)$
with Hurst index $H$. This is a Gaussian process with stationary
increments and spectral density of the form
\beq
 \label{lr3}
 f(\la)\sim c\,|\la|^{1-2H}, \q c>0, \ \,\, 1/2<H<1,
 % \,\, \la\in\mathbb{R},
 \eeq
as $\la\to 0$, and covariance function:% $r(t)$:
\beq
 \label{clr3}
r(t)\sim c\,t^{2H-2}, \,\,  1/2<H<1,
%\,\, t\in\mathbb{R},
 \eeq
as $t\to \f$, where the symbol $"\sim"$ indicates that the ratio of left- and right-hand sides tends to 1.
Notice that the form (\ref{lr3}) can be understood
%in the sense of time-scale analysis (Flandrin \cite{Fl}) or
in a limiting sense, since the fBm $B_H$ is a nonstationary process
(see, e.g., Solo \cite{So}, Gao et al. \cite{GAHT}).

%Observe that when $1/2<H<1$ the fBm $B_H(t)$ exhibits long-range dependence, while the case $H= 1/2$ corresponds to ordinary Brownian motion.

A proper stationary model in lieu of fBm is the fractional
Riesz-Bessel motion (fRBm), introduced in Anh et al. \cite{AAR},
and then extensively discussed in a number of papers (see,
e.g., Anh et al. \cite{ALM}, Gao et al. \cite{GAHT},
Leonenko and Sakhno \cite{LS}, and references therein).
The fRBm is defined to be a continuous-time Gaussian stationary process
with spectral density of the form
\begin{equation}
\label{rb1}
f(\la)=\frac{{c}}{|\la|^{2u}{(1+\la^2)^{v}}}, \q \la\in\mathbb{R},
\, 0<c<\f, \, 0<u<1/2, \,\ v>0.
\end{equation}

%It is noted that the process $X(t)$ is stationary if $0<\al<1/2$ and is nonstationary with stationary increments if $1/2\le\al<1.$
\n Observe that the spectral density (\ref{rb1}) behaves
as $O(|\la|^{-2u})$ as $|\la|\to 0$ and as $O(|\la|^{-2(u+v)})$
as $|\la|\to \f$.
Thus, under the conditions $0<u<1/2$, $v>0$ and $u+v>1/2$
the function $f(\la)$ in (\ref{rb1}) is well-defined for both
$|\la|\to 0$ and  $|\la|\to \f$ due to the presence of the component
$(1+\la^2)^{-v}$, which is the Fourier transform of the Bessel potential.
Note that in the spacial case $0<u<1/2$, $v>1/2$ the condition $u+v>1/2$
holds automatically.
The exponent $u$ determines the LRD, while the exponent $v$
indicates the second-order intermittency of the fRBm
(see, e.g.,  Anh et al. \cite{ALM} and Gao et al. \cite{GAHT}).

Comparing (\ref{lr3}) and (\ref{rb1}), we observe that the
spectral density of fBm is the limiting case as $v\to 0$ that of
fRBm with Hurst index $H=u+1/2.$
\n Thus, the form (\ref{rb1}) means that fRBm may exhibit both
LRD and second-order intermittency.
%We consider the stationary case, and assume that $0<\al<1/2$, $\be>0$ and $\al+\be>1/2$.

The next result, which was proved in Ginovyan and Sahakyan \cite{GS5},
gives an asymptotic formula for covariance function of an fRBm:
Let $f(\la)$ be as in (\ref{rb1}) with $0<u<1/2$ and $v>1/2$,
and let $r(t):\,=\hat f(t)$ be the Fourier transform of $f(\la)$, then
\beq
\label{s04}
r(t)=C t^{2u-1}\sin(\pi \al)\G(1-2u)
\cdot\left(1+o(1)\right)\q {\rm as} \q t\to\f.
\eeq
%Throughout the paper the letters $C$ and $c$ are used to denote positive
%constants, the values of which can vary from line to line.

\section{The approach and results}
\label{AR}
The basic approach in estimating unknown spectral parameters,
originated by Whittle \cite{Wh1}, is based on the smoothed periodogram
analysis on a frequency domain, involving approximation of the likelihood
function and asymptotic distributions of empirical spectral functionals.

The Whittle estimation procedure, originally devised for discrete-time
short memory stationary processes, has played a major role in the parametric
estimation in the frequency domain, and was the focus of interest
of many statisticians.
Their aim was to weaken the conditions needed to guarantee the
validity of the Whittle approximation for short memory models,
to find analogues for long and intermediate memory models,
and to show that the Whittle estimator is asymptotically equivalent
to exact maximum likelihood estimator (see, e. g., Dahlhaus \cite{D},
Dzhaparidze \cite{Dz}, Fox and Taqqu \cite{FT2}, Giraitis and Surgailis
\cite{GirS}, Giraitis et al. \cite{GKS}, and references therein).
In particular, it was shown that for Gaussian and linear stationary models the
Whittle approach leads to consistent and asymptotically normal estimators
with the standard rate of convergence under short, intermediate and long
memory assumptions.

%The basic approach, originated by Whittle for discrete-time processes but with major new developments,
%is to model with a flexible family containing components with both SRD and LRD, whose spectral density $f(\la,\theta)$ has the Hurst index $H$ as one of the components of the vector parameter $\theta$.

Continuous versions of Whittle estimation procedure have been
considered, for example, in Anh et al. \cite{ALS,ALS2},
Avram et al. \cite{AvLS}, Casas and Gao \cite{CG}, Gao \cite{Go},
Gao et al. \cite{GAHT,GAH}, Leonenko and Sakhno \cite{LS}.
%Comte and Renault \cite{CR},

The procedure of estimation of a parameter $\theta$ involved in
the spectral density $f(\la,\theta)$ of the model, based on a finite
realization ${\bf Y}_T:=\{Y(t)$, $0\le t\le T\}$ of the centered
stationary process $Y(t)$, is to choose the estimator $\hat\theta_{W}$
to minimize the weighted Whittle functional:
\beq
\label{pe9}
U_{w,T}(\theta): =\frac1{4\pi}\itf\left[\log f(\la, \theta) +
\frac{I_{T,Y}(\la)}{f(\la, \theta)}\right]\cd w(\la) \, d\la,
\eeq
where
\begin{equation} \label{e4}
I_{T,Y}(\la) =\frac1{2\pi T}\left|\int_0^Te^{i\la t}Y(t)\,dt\right|^2
\end{equation}
is the "continuous" periodogram of $Y(t)$,
and $w(\la)$ is an even weight function (that is, $w(-\la)=w(\la)$, $w(\la)\ge0$,
and $w(\la)\in L^1(\mathbb{R})$) for which the integral in (\ref{pe9})
is well defined. The choice of an appropriate weight function depends
on the specific form of the spectral density (see, e.g., Anh et al. \cite{ALS2}).
An example of common used weight function is $w(\la)=1/(1+\la^2)$.

Thus, the  Whittle estimator $\hat\theta_{W}$ with weight function
$w(\la)$ is defined to be a solution of the following estimating equation
\bea
\label{re07}
\itf\left[I_{T,Y}(\la)-{f(\la, \theta)} \right]
\frac{\partial}{\partial\theta}f^{-1}(\la, \theta)\cd w(\la) d\la=0,
\eea
obtained by differentiating under the integral sign in (\ref{pe9}).

%The asymptotic properties of the Whittle estimator $\hat\theta_{W}$ are determined by the asymptotic distribution of the smoothed periodogram form
%\bea
%\label{re7}
%\itf\left[I_{T,Y}(\la)-{f(\la, \theta)} \right]
%\frac{\partial}{\partial\theta}f^{-1}(\la, \theta)\cd w(\la) d\la.
%\eea
The asymptotic properties of the Whittle estimator $\hat\theta_{W}$ then
can be obtained using the standard Taylor expansion methods based on
the following smoothed periodogram convergence results:
\bea
\label{re009}
\itf g(\la, \theta)I_{T,Y}(\la)d\la\ConvP \itf g(\la, \theta){f(\la, \theta)}d\la
\quad {\rm as}\quad T\to\f,
\eea
and
\bea
\label{re09}
T^{1/2}\itf g(\la, \theta)\left[I_{T,Y}(\la)-{f(\la, \theta)} \right]d\la
\ConvD \xi\sim N(0,\si^2)\quad {\rm as}\quad T\to\f
\eea
with
\beq\label{eq:sigma}
\si^2 = 16\pi^3\int_{-\infty}^\infty f^2(\la, \theta) g^2(\la, \theta)d\la,
\eeq
where $g(\la,\theta)=\frac{\partial}{\partial\theta}f^{-1}(\la, \theta)w(\la)$,
$\, I_{T,Y}(\la)$ is the periodogram of $Y(t)$ given by (\ref{e4}), $N(0,\si^2)$
denotes the normal law with mean zero and variance $\si^2$, and $\ConvD$ and $\ConvP$
stand for convergence in distribution and in probability, respectively.

Using this approach, statistical properties of Whittle minimum contrast
estimators for conti\-nuous-time stationary processes were studied
in Anh et al. \cite{ALS}, Avram et al. \cite{AvLS}, Casas and Gao
\cite{CG}, Gao \cite{Go}, Gao et al. \cite{GAHT,GAH}, Leonenko and Sakhno
\cite{LS}.
In particular, consistency and asymptotic normality of Whittle
minimum contrast estimator $\hat\theta_{W}$ was established for
some classes of stationary models, including the fractional Riesz-Bessel
motion model, specified by spectral density $f(\la)=f(\la; \theta)$ given by
(\ref{rb1}) with $\theta=(u, v, c)$.

In our analysis we will use a general even integrable smoothing function
$g(\la; \theta)$ rather than the specific form
$g(\la,\theta)=\frac{\partial}{\partial\theta}f^{-1}(\la, \theta)w(\la)$
which is suggested by the Whittle procedure in (\ref{re07}).
The general estimator $\hat\theta_G$ of $\theta$ is then obtained as a
solution of the estimating equation
\bea
\label{re8}
\itf\left[I_{T,Y}(\la)-{f(\la, \theta)} \right]g(\la, \theta)d\la=0.
\eea
Then the asymptotic properties of the estimator $\hat\theta_G$ can be
obtained from smoothed periodogram convergence results of type (\ref{re009})
and(\ref{re09}) with general smoothing function $g(\la; \theta)$.

Notice that in the continuous context the basic tool for derivation of
limit theorems for empirical spectral functionals of the form
\bea
\label{re88}
J(g):=\itf g(\la, \theta)I_{T,Y}(\la)d\la
\eea
is a central limit theorem for Toeplitz type quadratic functionals of
stationary processes (see Ginovyan \cite{G-3,G-4}, Ginovyan and
Sahakyan \cite{GS2} for Gaussian processes, and Bai et al. \cite{AvLS,BGT2}
for linear processes).

It can be shown that the standard Taylor expansion methods based on the
smoothed periodogram convergence results of type (\ref{re009}) and (\ref{re09})
with a general smoothing function $g(\la; \theta)$ and with the contaminated
periodogram $I_{T,X}(\la)$ instead of $I_{T,Y}(\la)$, lead consistent and
asymptotically normally distributed estimators of $\theta$.
We will not pursue this matter here (the details will be reported elsewhere),
however, notice that in the special case of Whittle procedure, where
$g(\la; \theta)=\frac{\partial}{\partial}\frac{1}{f(\la, \theta)}\cd w(\la)$
the results of Anh et al. \cite{ALS}, Avram et al. \cite{AvLS}, Casas and Gao
\cite{CG}, Gao \cite{Go}, Gao et al. \cite{GAHT,GAH}, Leonenko and Sakhno
\cite{LS} concerning consistency and asymptotic normality of the Whittle
minimum contrast estimators constructed on the basis of the periodogram
$I_{T,Y}(\la)$, continue to hold without change for estimators calculated
on the basis of the contaminated periodogram $I_{T,X}(\la)$,
%rather than $I_{T,Y}(\la)$,
under appropriate assumptions imposed on the model $Y(t)$ on the smoothing
function $g(\la, \theta)$ and on the trend $M(t)$.
% (see Theorem \ref{th1} below).

In Theorem \ref{th1} that follows we show that
%{in the case of a process $Y(t)$ with SM* }
a small trend of the form $|M(t)|\leq C|t|^{-\beta}$ %with $1/4<\be<1$
does not effect the asymptotic properties (\ref{re009}) and (\ref{re09}) of the
smoothed periodogram, and hence, the asymptotic properties of the estimator
$\hat\theta_G$, even if $I_{T,Y}(\la)$ is replaced by the contaminated
periodogram $I_{T,X}(\la)$.

 %In  the case where the model exhibits LM or IM, we show that the corresponding robustness properties of the smoothed periodogram are still preserved, that is, a trend of the form $|M(t)|\leq C|t|^{-\beta}$ still does not effect the analysis, but now the range of exponent $\be$ depends on the memory index of the process  $Y(t)$ and in the LM case it deteriorates with increasing memory index. The main results of this paper are summarized in the following theorem.}

\begin{thm}
\label{th1}
Suppose that the stationary mean zero process $\{Y(t), \, t\in \mathbb{R}\}$
in ({\ref{i1}}) is such that the asymptotic relations (\ref{re009}) and (\ref{re09})
are satisfied with general even integrable smoothing function $g(\la)$
and $\si^2$ as in
(\ref{eq:sigma}).
If the trend $M(t)$ and the Fourier transform $a(t):=\widehat g(t)$
of smoothing function $g(\la )$ are such that $M(t)$ is locally integrable
on ${\mathbb R}$ and
\beq\label{re151}
|M(t)|\leq C|t|^{-\beta},\qquad |a(t)|\leq C|t|^{-\g}, \quad
t\in\mathbb{R},\quad 2\beta+\g>\frac 32,
\eeq
with some constants $C > 0$, $\g>0$ and  $\be>1/4$, then
\bea
\label{re9}
T^{1/2}\itf g(\la, \theta)\left[I_{T,X}(\la)-I_{T,Y}(\la) \right]d\la
\ConvP 0 \quad {\rm as}\quad T\to\f,
\eea
and hence the asymptotic relations (\ref{re009}) and (\ref{re09}) are satisfied
with $I_{T,Y}(\la)$ replaced by the contaminated periodogram $I_{T,X}(\la)$,
provided that one of the following conditions holds:
\begin{itemize}
\item[(i)]
the process $Y(t)$ has SM or IM, that is, the covariance function $r(t)$ of   $Y(t)$
satisfies $\ r\in L^1(\mathbb{R})$, and  $\be+\g>1$,
\item[(ii)]
the process $Y(t)$ has LM with %locally integrable
covariance function $r(t)$ satisfying
\beq\label{re99}
|r(t)| \le C|t|^{-\al}, \quad  t\in\mathbb{R},\quad \al+\g\ge\frac32
\eeq
with some constants  $C>0$, $0<\al\leq 1$,  and $\al+2\be>1$ if $\be<1<\g$.
\end{itemize}
\end{thm}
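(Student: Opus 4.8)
The plan is to split the integral in (\ref{re9}) into a mean-zero stochastic part and a deterministic part, and to estimate each after passing to the time domain. Put $J_{T,Y}(\la)=\int_0^Te^{i\la t}Y(t)\,dt$ and $J_{T,M}(\la)=\int_0^Te^{i\la t}M(t)\,dt$; since $X=Y+M$,
\beq
2\pi T\big[I_{T,X}(\la)-I_{T,Y}(\la)\big]=2\,\Re\!\big(J_{T,Y}(\la)\overline{J_{T,M}(\la)}\big)+\big|J_{T,M}(\la)\big|^2 .
\eeq
Multiplying by $g(\la)$, integrating over $\la$, and applying Fubini together with $\itf g(\la)e^{i\la u}\,d\la=c\,a(u)$ (the even kernel $a=\widehat g$, the normalisation absorbed into $c$) reduces the left side of (\ref{re9}) to $T^{-1}(c_1Z_T+c_2D_T)$, where
\beq
Z_T=\int_0^T\!\!\int_0^T Y(t)M(s)\,a(t-s)\,dt\,ds,\qquad D_T=\int_0^T\!\!\int_0^T M(t)M(s)\,a(t-s)\,dt\,ds .
\eeq
As $\E Y\equiv 0$ gives $\E Z_T=0$, it suffices to show $T^{-1/2}D_T\to 0$ and, via Chebyshev's inequality, $T^{-1}\E[Z_T^2]\to 0$.

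The technical core is a sharp bound for the profile $\phi(t):=\int_0^T|M(s)|\,|a(t-s)|\,ds$. Here one must exploit that $a=\widehat g$ is \emph{bounded} near the origin (because $g\in L^1(\mathbb R)$), so the correct pointwise bound is $|a(u)|\le C\min\{1,|u|^{-\g}\}$; replacing it by $|u|^{-\g}$ alone would create a spurious divergence at $u=0$ and destroy the estimate. Feeding this and $|M(s)|\le C|s|^{-\be}$ into $\phi$ and carrying out a beta-integral (scaling) analysis yields $\phi(t)\le C(1+t)^{1-\be-\g}$ when $\g<1$ and $\be+\g>1$, $\phi(t)\le C\,T^{1-\be-\g}$ when $\be+\g<1$, and $\phi(t)\le C(1+t)^{-\be}$ when $\g>1$; in every regime $\phi$ remains bounded near $t=0$. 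I expect this step to be the main obstacle: the three regimes, together with the boundary contributions near $s=0$ and $s=T$ and the integrability on the diagonal and at the corners of the rescaled square, are precisely what force the case distinction that follows.

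For the deterministic term, $|D_T|\le\int_0^T|M(t)|\,\phi(t)\,dt$, and the scaling bounds for $\phi$ (again using the capped bound for $a$) give $|D_T|\le C\big(T^{2-2\be-\g}+T^{1-2\be}\big)$ up to a logarithmic factor, the first summand governing the regime $\g<1$ and the second the regime $\g\ge1$. Hence $T^{-1/2}|D_T|\le C\big(T^{3/2-2\be-\g}+T^{1/2-2\be}\big)\to 0$, which is exactly where the two standing hypotheses of (\ref{re151}), $2\be+\g>\tfrac32$ and $\be>\tfrac14$, enter.

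For the stochastic term, expanding the square and using $\E[Y(t)Y(t')]=r(t-t')$ gives
\beq
\E[Z_T^2]=\int_0^T\!\!\int_0^T r(t-t')\,\psi(t)\,\psi(t')\,dt\,dt',\qquad \psi(t):=\int_0^T M(s)\,a(t-s)\,ds ,
\eeq
so that $|\E[Z_T^2]|\le\int_0^T\int_0^T|r(t-t')|\,\phi(t)\,\phi(t')\,dt\,dt'$. In case (i), $r\in L^1(\mathbb R)$, Young's convolution inequality bounds this by $\|r\|_{L^1}\int_0^T\phi(t)^2\,dt$, and the profile estimates give $\int_0^T\phi^2=o(T)$ precisely when $\be+\g>1$. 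In case (ii), inserting $|r(t-t')|\le C|t-t'|^{-\al}$ and rescaling $t=Tu$, $t'=Tv$ yields $\E[Z_T^2]\le C\,T^{4-\al-2\be-2\g}$ in the regime $\g<1$ (the diagonal being integrable since $\al\le 1$), whence $T^{-1}\E[Z_T^2]\to 0$ once $\al+2\be+2\g>3$; this is the sum of the two hypotheses $\al+\g\ge\tfrac32$ and $2\be+\g>\tfrac32$ and so holds automatically. In the remaining regime $\be<1<\g$ one has $\phi(t)\le C(1+t)^{-\be}$, which produces $\E[Z_T^2]\le C\,T^{2-\al-2\be}$ and the requirement $\al+2\be>1$, exactly the extra condition imposed in (ii). Combining the two bounds gives (\ref{re9}), and the asserted transfer of (\ref{re009})--(\ref{re09}) from $I_{T,Y}$ to $I_{T,X}$ is then immediate.
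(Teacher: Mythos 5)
Your proposal follows essentially the same route as the paper's own proof: the same time-domain decomposition into a deterministic term $D_T$ and a mean-zero cross term $Z_T$, the same Chebyshev reduction to $T^{-1}\E[Z_T^2]\to 0$, the same profile function $\phi$ (the paper's $\nu$, estimated there via the capped bound $|a(u)|\le C\min\{1,|u|^{-\gamma}\}$ exactly as you indicate), and the same case analysis in (ii) driven by $\alpha+2\beta+2\gamma>3$ and the extra hypothesis $\alpha+2\beta>1$ when $\beta<1<\gamma$. The only cosmetic differences are your use of Young's inequality in case (i) where the paper uses a sup-times-$L^1$ bound, and scaling arguments in place of the paper's explicit integrals with their $\log T$ corrections at boundary exponents (e.g.\ $\alpha=1$ or $\beta+\gamma=2$) and extra profile terms $s^{-\beta},s^{-\gamma}$, which your sketch glosses over but which do not affect the conclusions.
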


%\begin{rem} {\rm It is not hard to see that the following conditions on parameters are sufficient for the statement of Theorem \ref{th1} to be true  in the case  {\it (i)}: $\be>1/2,\ \g\geq1/2$, in the case {\it (ii)}: $\al\geq3/4, \ \be>3/8,\ \g\geq3/4$.}\end{rem}

\begin{rem}
{\rm
%It can easily be verified that in the case where $0<\g\leq 1$, the following conditions on parameters are sufficient for the statement of
%Theorem \ref{th1} to be true in the case  {\it (ii)}: $2\be+\g>3/2$ and $\al+\g\geq3/2$.
It is easy to check that the statement of Theorem \ref{th1} holds,
in particular, if the parameters $\al$, $\be$ and $\g$ satisfy the following
conditions:

 in the case  {\it (i)}: $\be>1/2,\ \g\geq1/2$,

 in the case {\it (ii)}: $\al\geq3/4, \ \be>3/8,\ \g\geq3/4$.}
\end{rem}
\begin{rem}
{\rm %Notice that the conditions in Theorem \ref{th1} are less restrictive
%than the ones in Heyde and Dai \cite{HD}.
The discrete version of Theorem \ref{th1}
(with additional conditions $\g=1$ in the case {\it (i)}, and $\g>1,\ \al<1/2$
in the case {\it (ii)}), was proved by Heyde and Dai \cite{HD}
(see also Taniguchi and Kakizawa \cite{TK}, Theorems 6.4.1 and 6.4.2).
%Notice also that in Theorem \ref{th1} we allow the model to exhibit IM ($\al>1$),
%\underline {a case which was not discussed in \cite{HD}.??????????????????}
Using the same arguments applied in the proof of Theorem \ref{th1}
one can prove that the complete discrete  analog of Theorem \ref{th1} is also true.}

\end{rem}
\begin{rem}
{\rm Convergence results of type (\ref{re009}) and (\ref{re09}) holds under
broad circumstances of SM, IM and LM . For detailed conditions see, for example,
Avram et al. \cite{AvLS}, Ginovyan \cite{G-2} -- \cite{G-9}, Ginovyan and
Sahakyan \cite{GS2}, and Leonenko and Sakhno \cite{LS}.}
\end{rem}
\begin{rem}
{\rm The conditions imposed on the Fourier transform of generating function
$g(t)$ in (\ref{re151}) and on the covariance function $r(t)$ in (\ref{re99})
ensure central limit theorem for empirical functionals of Gaussian and linear
long memory processes.
This can be seen from the considerations of Theorem 5 of Ginovyan and Sahakyan
\cite{GS2} (for Gaussian processes), and
Theorem 2.1 and Corollary 2.1 of Bai et al. \cite{BGT2} (for linear processes).}
\end{rem}

\section{Proof of the main result}
\label{Proof}
\begin{proof}[Proof of Theorem \ref{th1}]
In view of (\ref{i1}) and (\ref{e4}) we can write
\bea\label{re12}
\nonumber
I_{T,X}(\la) - I_{T,Y}(\la)&=&\frac1{2\pi T}\left(\left|\int_0^Te^{i\la t}X(t)\,dt\right|^2 -
\left|\int_0^Te^{i\la t}Y(t)\,dt\right|^2\right)\\
\nonumber
&=&
\frac1{2\pi T}\left(\left|\int_0^Te^{i\la t}[Y(t)+M(t)]\,dt\right|^2 -
\left|\int_0^Te^{i\la t}Y(t)\,dt\right|^2\right)\\
\nonumber
&=&\frac1{2\pi T}\int_0^T\int_0^Te^{i\la (t-s)} \left[Y(t)M(s)+Y(s)M(t)+M(t)M(s)\right]\,dtds
\eea
and
\bea\label{re13}
\nonumber
&&\itf g(\la, \theta)\left[I_{T,X}(\la)-I_{T,Y}(\la) \right]d\la\\
&&=\frac1T\int_0^T\int_0^T\left[Y(t)M(s)+Y(s)M(t)+M(t)M(s)\right]a(t-s)\,dtds.
\eea
%where $a(t):=\widehat g(t)$ is the Fourier transform of $g(t)$.

Thus, to complete the proof it is enough to prove that under the conditions
of the theorem we have
\beq\label{re15}
T^{-1/2}\int_0^T\int_0^TM(t)M(s)a(t-s)\,dtds \to 0\quad {\rm as}\quad T\to\f
\eeq
and
\bea\label{re14}
T^{-1/2}\int_0^T\int_0^TY(t)M(s)a(t-s)\,dtds \ConvP 0\quad {\rm as}\quad T\to\f.
\eea
%Observe first that in both cases {\it (i)} and {\it (ii)} we have
%\bea\label{b+g} \be+\g>1,\qquad \eea since in the case {\it (ii)},  from  (\ref{re151})  and  (\ref{re99}) follows that $2\be+2\g=(2\be+\g)+(\al+\g)-\al>3-\al\geq 2$.

{ \sl Proof of}  (\ref{re15}).  For $T>2$ we set
\bea\label{re153}
I(T)&:=&
 \int_0^T\int_0^T\left|M(t)M(s)a(t-s)\right|\,dtds \\
&=&
\int_0^{1}\int_0^{2}+\int_0^{1}\int_{2}^T +\int_{1}^T\int_{0}^{1/2} +
\int_{1}^T\int_{1/2}^T =:I_1(T)+I_2(T)+I_3(T)+I_4(T),\nonumber
\eea
and estimate the integrals $I_i(T), \, i=1,2,3,4$, separately.

Observe first that the Fourier transform $a(t):=\widehat g(t)$ is a bounded
function on $\mathbb R$, since $g$ is integrable  on  $\mathbb R$.
Hence, taking into account that by assumption the trend $M(t)$ is locally
integrable on ${\mathbb R}$,  for $I_1(T)$ we obtain the estimate
\beq
I_1(T)\leq C\|a\|_\infty\int_0^{1}|M(s)|\,ds\int_0^{2}|M(t)|\,dt
\leq C<\f, \qquad T>2.
\eeq
Next, in view of (\ref{re151}), for $0<s<1$ and $t>2$ we have
$|a(t-s)|\leq C(t-s)^{-\g}\leq Ct^{-\g}$, and hence, taking into account that
 $\be+\g>1$, $I_2(T)$ can be estimated as follows
\bea\label{re1531}
I_2(T)&\leq& C\int_{0}^1|M(s)|\,ds\int_2^{T}\frac1{t^{\beta+\g} }\,dt
\leq  C<\f, \qquad T>2.
\eea
Similarly, for $I_3(T)$ we have
\beq\label{re1532}
I_3(T)\leq C<\infty, \qquad T>2.
\eeq
To estimate $I_4(T)$ observe first that, in view of (\ref{re151}),
 for  $1<s<T$ we can write
 \bea\label{re154}
 h(s)&:=&\int_{1/2}^T\left|M(t)a(t-s)\right|\,dt\nonumber\\
 &\leq& C\left[\int_{(s-1)}^{s+1}\frac{|a(t-s)|}{t^\beta }dt +
 \int_{s+1}^{2s}\frac1{t^\beta(t-s)^\g }dt\right.\nonumber\\
&&\hskip18mm\left.+\int_{2s}^T\frac1{t^\beta(t-s)^\g }dt+\int^{s/2}_{1/2}
\frac1{t^\beta(s-t)^\g }dt+\int^{s-1}_{s/2}\frac1{t^\beta(s-t)^\g }dt\right]
\nonumber\\
&\leq&
C \left[\|a\|_\f \cdot s^{-\beta}+s^{-\beta}\int_{1}^{s}\frac1{\tau^\g }d\tau+
\int_{2s}^T\frac1{t^{\be+\g} }dt
+s^{-\g}\int^{s/2}_{1/2}\frac1{t^\beta }dt+s^{-\beta }\int_{1}^{s/2}
\frac1{\tau^\g }d\tau\right]\nonumber\\
&\leq&
 C\left[ s^{-\beta}+L(\gamma,T) s^{1-\beta-\g}+L(\be+\g,T)\left(T^{1-\beta-\g}+
 s^{1-\beta-\g}\right)\right.\nonumber\\
 &&\hskip2cm +\left. L(\be,T)\left(s^{1-\beta-\g}+s^{-\g}\right)
 +L(\gamma,T) s^{1-\beta-\g}\right]\nonumber\\
 &\leq&
  C\log T\cdot \left(T^{1-\beta-\g}+ s^{1-\beta-\g}+s^{-\be}+s^{-\g}\right),
\eea
where the function $L(u,T)$ is defined by
\begin{equation*}\label{L}
L(u,T)=
\begin{cases}
\log T& \text {if} \q u=1,\cr
1 & \text{otherwise}.
\end{cases}
\end{equation*}
Taking into account that $\be+\g>1$, from (\ref{re154}) we get
\beq \label{1541}
 h(s)\leq  C\log T\cdot \left( s^{1-\beta-\g}+s^{-\be}+s^{-\g}  \right),\qquad 1<s<T,
 \eeq
and hence for $ T>2$, $I_4(T)$ can be estimated as follows
\bea\label{re155}
I_4(T)&=&  \int_{1}^T|M(s)h(s)|\,ds\nonumber \\
&\leq& C\log T
\left[   \int_1^T \frac1{ s^{2\beta+\g-1}}\,ds + \int_1^T \frac1{s^{2\be}}\,ds
+\int_1^T \frac1{ s^{\beta+\g}}\,ds
\right]\nonumber \\
&\leq&
C\log T\left[1+ L(2\be+\g-1, T)\,T^{2-2\be-\g}+L(2\be,T)\,T^{1-2\be}+T^{1-\be-\g}
\right]\nonumber \\
&\leq&
C\log^2 T\left(1+ T^{2-2\be-\g}+T^{1-2\be}\right).
\eea
Finally, taking into account that by assumption $2\beta+\g>3/2$ and $\be>1/4$,
from (\ref{re153})-(\ref{re1532}) and (\ref{re155}) we obtain
$$
T^{-1/2}\cdot I (T)\leq C \log^2 T\left(T^{-1/2}+ T^{3/2-2\beta-\g}
+T^{1/2-2\be}\right)\to 0\quad{\rm as}\quad T\to \infty,
$$
which implies (\ref{re15}).

{ \sl Proof of}  (\ref{re14}). Observe first that the inequality
\beq \label{b+g}
\be+\g>1
\eeq
holds also in the case {\it(ii)}, since by (\ref{re151}) and  (\ref{re99}),
we have
 $2\be+2\g\geq2\be+\g+3/2-\al>3/2+1/2=2$.

Denote
%\bea\label{re156}
$$
\nu(s) =\nu(T,s) := \int_0^TM(t)a(t-s)\,dt,\quad 0<s<T,
$$
and observe that
$$
\int_0^{1/2}|M(t)a(t-s)|\,dt\leq C\int_0^{1/2}|M(t)|\frac1{(s-1/2)^\g}\,dt
\leq C\cdot s^{-\g},\qquad 1<s<T,
$$
and by (\ref{1541}),
\bea\label{re157}
|\nu(s)|\leq  C\log T\cdot \left( s^{1-\beta-\g}+s^{-\be}+s^{-\g}  \right), \qquad 1<s<T.
\eea
On the other hand, by (\ref{re151}), for $T>2$ and $0<s<1$ we have
\bea\label{re158}
|\nu(s)|\leq C\left[ \|a\|_\infty\int_0^2|M(t)|\,dt
+\int_2^T\frac1{t^\be(t-1)^\g}dt\right]
\leq  C\log T.
\eea
Observe that from  (\ref{re151}) and (\ref{re157}) it follows that (\ref{re158})
holds for $0<s<T$.

Now, we denote
%\bea\label{re159}
$$
Q(T):= T^{-1/2}\int_0^T\int_0^TY(s)M(t)a(t-s)\,dtds= T^{-1/2}\int_0^TY(s)\nu(s)ds,
$$
and observe that
\beaa\label{re160}
E\{Q^2(T)\}&=& T^{-1}\int_0^T\int_0^TE\{Y(s)Y(\tau)\}\nu(s)\nu(\tau)dsd\tau\\
&=&
T^{-1}\int_0^T\int_0^T\nu(s)\nu(\tau)r(s-\tau)dsd\tau.
\eeaa
Hence, to prove (\ref{re14}) it is enough show that
\bea\label{re161}
J(T):=\int_0^T\int_0^T|\nu(s)\nu(\tau)r(s-\tau)|\,dsd\tau= o(T)
\quad{\rm as}\quad T\to \infty.
\eea
In the case  {\it (i)}, when the process $Y(t)$ has SM or IM, and hence
$r\in L^1(\mathbb R)$, from  (\ref{re158}) for $T>2$ we get
\bea\label{intnu12}
|J(T)|\leq C\log T\int_0^{T}|\nu(s)|\int_0^{T}|r(s-\tau)|\,d\tau ds
 \leq C\log T \int_0^{T}|\nu(s)|\,ds.
\eea
In view of (\ref{re157}), the last integral in (\ref{intnu12}) can be estimated
as follows:
\bea\label{intnu}
\int_0^{T}|\nu(s)|\,ds
& \leq&
C\log^2T \left[\int_0^{1}\,ds
+\int_1^{T} \left( s^{1-\beta-\g}+s^{-\be}+s^{-\g}  \right)\,ds\right]\nonumber\\
& \leq&
C\log^2T \left[1
+ L(\be+\g-1,T)\,T^{2-\beta-\g}+L(\be,T)\,T^{1-\be}+L(\g,T)\,T^{1-\g} \right]\nonumber\\
&\leq& C\log^3T\left(
1+T^{1-\be}+T^{1-\g}+T^{2-\beta-\g}\right).
\eea
Hence, taking into account that $\be+\g>1$, from (\ref{intnu12}) and  (\ref{intnu})
we obtain
%which, by (\ref{re151}) implies
$$
J(T)=o(T)\quad {\rm as} \quad T\to\infty.
$$

In the case {\it (ii)},  when the process $Y(t)$ has LM, using (\ref{re99}), (\ref{b+g}),
(\ref{re157}) and  (\ref{re158}), for $1<\tau<T $ we obtain
\bea\label{re1545}
q(\tau)&:=&\int_0^T |\nu(s)r(s-\tau)|ds
\leq C\log T\int_0^{1/2}\left(\tau-\frac12\right)^{-\al} ds \nonumber\\
&+&
C\log T\left[\int_{1/2}^T \frac{|r(s-\tau)|}{s^{\beta+\g-1} }ds
+\int_{1/2}^T \frac{|r(s-\tau)|}{s^{\be}}ds
+\int_{1/2}^T \frac{|r(s-\tau)|}{s^{\g}}ds\right].
\eea
Taking into account that $r$ is bounded ($|r(t)|\leq r(0)=E|Y(t)|^2<\f,\
t\in \mathbb R $), and using similar arguments as in (\ref{re154}),
from (\ref{re99}) we obtain that for any $\eta> 0$
 \beaa
\int_{1/2}^T \frac{ |r(t-\tau)|}{t^\eta}\,dt
 &\leq&
  C\log T \left(T^{1-\al-\eta}+\tau^{1-\al-\eta} +\tau^{-\al}+\tau^{-\eta}\right)\nonumber\\
 &\leq&
  C\log T
\left(1+T^{1-\al-\eta}\right),
  \qquad 1<\tau<T.
\eeaa
Applying this inequality for $ \eta=\be+\g-1$, $\eta=\be$ and $\eta=\g$,
from  (\ref{re1545}) we obtain
 \bea\label{re1580}
q(\tau)\leq C\log^2T\left(1+T^{2-\al-\be-\g}+T^{1-\al-\be} \right),\qquad  1<\tau< T,
 \eea
since $\al+\g>1$.  On the other hand, by  (\ref{re157}) and (\ref{re158})
for $T>2$ and $0<\tau<1$, we have
\bea\label{re1581}
q(\tau)&\leq& C\left[ \log T\int_0^2|r(s-\tau)|\,ds
+\int_2^T\frac{|\nu(s)|}{(s-1)^\al}ds\right]\\
&\leq&\nonumber
C\log T
\left(1+ T^{2-\al-\be-\g}+T^{1-\al-\be}\right)\leq
C\log T \left(1+T^{1-\be}\right),\qquad  0<\tau< 1,
\eea
since $\al+\g>1$ and $\al> 0$.

Next, we denote
\bea\label{JT}
J(T)&=&\int_0^T |\nu(\tau)| \,q(\tau)\, d\tau=\int_0^1+\int_1^T=:J_1(T)+J_2(T),
\eea
and estimate $J_1(T)$ and $J_2(T)$. By  (\ref{re158}) and  (\ref{re1581}),
for $J_1(T)$ we have
\bea\label{JT1}
J_1(T)
\leq  C\log^2T\left(1+T^{1-\be}\right)
=o(T) \quad {\rm as}\quad T\to\f,
\eea
since  $\be>0$.

To estimate $J_2(T)$ we consider three cases, and use conditions (\ref{re151}),
(\ref{re99}), (\ref{b+g}) and inequalities  (\ref{re157}),  (\ref{re1580}).

{\bf Case 1.} If  $\be \geq 1 $, then we have
$$
|\nu(\tau)|\leq C\log T\left( \tau^{-\be}+ \tau^{-\g}\right),
\quad q(\tau)\leq C \log ^2T, \qquad 1<\tau<T,
$$
and hence
\bea\label{JT21}
J_2(T)
\leq C\log^3 T\left(1+T^{1-\be}+T^{1-\g}\right) =o(T) \quad {\rm as}\quad T\to\f.
\eea

{\bf Case 2. } If  $\be<1<\g$, then we have
$$
|\nu(\tau)|\leq C\log T\cdot \tau^{-\be}, \qquad q(\tau)
\leq C \log ^2T\left(1+T^{1-\al-\be}\right) \qquad 1<\tau<T,
$$
and hence
\bea\label{JT21}
J_2(T)
&\leq& C\log^3 T\left(1+T^{1-\be}\right) \left(1+T^{1-\al-\be}\right)\nonumber\\
&\leq& C\log^3 T \left(1+T^{1-\al-\be}+T^{1-\be}+T^{2-\al-2\be}\right)
=o(T) \quad {\rm as}\quad T\to\f
\eea
since in this case by assumption $\al+2\be>1$.

{\bf Case 3. } If  $\be <1$ and $\g\leq 1$, then we have
$$
|\nu(\tau)|\leq C\log T\cdot \tau^{1-\beta-\g}, \qquad
q(\tau)\leq C \log ^2T\left(1+T^{2-\al-\be-\g}\right), \qquad 1<\tau<T,
$$
and hence
\bea\label{JT23}
J_2(T)
&\leq& C\log^3 T\left(1+T^{2-\be-\g}\right) \left(1+T^{2-\al-\be-\g}\right)\\
&\leq&C\log^3 T\left(1+T^{2-\al-\be-\g}+T^{2-\be-\g}+T^{4-\al-2\be-2\g}\right)
\nonumber
=o(T)\quad {\rm as}\quad T\to\f,
\eea
since $\be+\g>1$ and  $\al+2\be+2\g=(2\be+\g)+(\al+\g)>3$.

From (\ref{JT})--(\ref{JT23}) we obtain
$$
J(T) =o(T)\quad {\rm as}\quad T\to \f.
$$
Thus, the relation (\ref{re161}) and hence (\ref{re14}) are proved.

Theorem  \ref {th1} is proved.
\end{proof}

%\newpage
\small

\end{document}